\documentclass[12pt]{amsart}

   \topmargin = 0 in
   \headsep = .1 in
   \textwidth = 6.5 in
   \textheight = 8.9 in
   \baselineskip = .16666 in
   \oddsidemargin = 0 in
   \evensidemargin = 0 in

\usepackage{amsthm}

\usepackage{amssymb}
\usepackage{verbatim}
\usepackage[curve,matrix,arrow]{xy}
\usepackage{amsmath,amsfonts}
\usepackage{graphicx}
\usepackage{epsf}

\newcommand{\thmref}[1]{Theorem~\ref{#1}}

\newcommand{\eqnref}[1]{equation~(\ref{#1})}

\newcommand{\figref}[1]{Figure~\ref{#1}}

  {\end{list}}

\newtheorem{theorem}{Theorem}[section]

\newtheorem{corollary}[theorem]{Corollary}

\newcommand{\secref}[1]{\S\ref{#1}}

\def\mod{\text{mod}}

\begin{document}

\title[An extension of Lucas Theorem]
{An extension of Lucas Theorem}

\author{Zubeyir Cinkir\\
 \and \\
Aysegul Ozturkalan\\
}
\address{Zubeyir Cinkir\\
Department of Industrial Engineering\\
Abdullah Gul University\\
38100, Kayseri, TURKEY\\
}
\email{zubeyir.cinkir@agu.edu.tr}
\address{Aysegul Ozturkalan\\
Department of Engineering Sciences\\
Abdullah Gul University\\
38100, Kayseri, TURKEY\\}
\email{aysegul.ozturkalan@agu.edu.tr}

\keywords{Binomial Coefficients, Pascal's Triangle, Lucas Theorem, Summation Identities, Cogruences of Binomial Coefficients}

\begin{abstract}
We give elementary proofs of some congruence criteria to compute binomial coefficients in modulo a prime. These criteria are analogues to the symmetry property of binomial coefficients. We give extended version of Lucas Theorem by using those criteria. We give applications of these criteria by describing a method to derive identities and congruences involving sums of binomial coefficients.
\end{abstract}

\maketitle

\section{Introduction}\label{sec introduction}

It is because of the beauty and the richness of number theory, and mathematics in general, that you can come up with new formulations and discoveries even in the basic and fundamental subjects. 

Pascal's triangle and its properties have been investigated for a long time and it is a source of entertainment even for non-mathematicians. Its building blocks, binomial coefficients, are subject to numerous studies. For example, 
Lucas Theorem gives a remarkable method to compute binomial coefficients in modulo a prime. Namely, 
\begin{theorem}[Lucas Theorem, \cite{L2}]\label{thm Lucas}
For a given prime number $n$, let $m \geq n$ be an integer and let $k$ be an integer with $ 0 \leq k \leq m$.
Suppose $m=\sum_{i=0}^r m_i n^i$ and $k=\sum_{i=0}^r k_i n^i$ are expansions of $m$ and $k$ in base $n$ so that $r$ is an integer of appropriate size and $0 \leq m_i <n $ and $0 \leq k_i <n$ are integers for each $i=0, \, 1, \, \cdots, r$.
Then we have
\begin{equation*}
\binom{m}{k} \equiv \binom{m_r}{k_r}\binom{m-m_r n^r}{k-k_r n^r} \quad \mod \, \, \, n.
\end{equation*}
Moreover,
\begin{equation*}
\binom{m}{k} \equiv \binom{m_r}{k_r}\binom{m_{r-1}}{k_{r-1}} \cdots \binom{m_1}{k_1} \binom{m_0}{k_0} \quad \mod \, \, \, n.
\end{equation*}
\end{theorem}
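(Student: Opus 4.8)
The plan is to prove the full product congruence by a generating-function argument over $\ZZ/n\ZZ$, and then to recover the first (single-digit) congruence as a special case. The starting point is the polynomial identity sometimes called the freshman's dream: for a prime $n$ one has $(1+x)^n \equiv 1 + x^n \pmod{n}$, which follows at once from the fact that $n$ divides $\binom{n}{j}$ for every $0 < j < n$. Iterating this---or arguing by a short induction on $i$---yields $(1+x)^{n^i} \equiv 1 + x^{n^i} \pmod{n}$ for each $i \geq 0$.

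Next I would use the base-$n$ expansion $m = \sum_{i=0}^r m_i n^i$ to factor
\[
(1+x)^m = \prod_{i=0}^r \left((1+x)^{n^i}\right)^{m_i} \equiv \prod_{i=0}^r \left(1 + x^{n^i}\right)^{m_i} \pmod{n},
\]
where the congruence is read coefficientwise. Expanding each factor by the binomial theorem gives
\[
(1+x)^m \equiv \prod_{i=0}^r \left(\sum_{j_i=0}^{m_i} \binom{m_i}{j_i} x^{j_i n^i}\right) \pmod{n}.
\]
The decisive step is then to extract the coefficient of $x^k$ on both sides. On the left it equals $\binom{m}{k}$. On the right, a monomial contributes to $x^k$ precisely when $\sum_{i=0}^r j_i n^i = k$ with $0 \le j_i \le m_i \le n-1$; here I would invoke the uniqueness of the base-$n$ representation, which forces $j_i = k_i$ for every $i$, since each $j_i$ already lies in $\{0,1,\dots,n-1\}$. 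Hence the coefficient of $x^k$ on the right is $\prod_{i=0}^r \binom{m_i}{k_i}$, establishing the second (full product) congruence.

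The main obstacle---and the place where the hypotheses $0 \le m_i, k_i < n$ are essential---is exactly this coefficient-matching argument: one must rule out any ``carrying'' that could let contributions from different powers $n^i$ combine to produce $x^k$, so that the digits $j_i$ are genuinely pinned to $k_i$. Once the product form is in hand, the first congruence follows by grouping: isolate the top factor $\binom{m_r}{k_r}$ and observe that $\prod_{i=0}^{r-1}\binom{m_i}{k_i}$ is, by the product form applied to the lower digits of $m - m_r n^r$ and $k - k_r n^r$, congruent to $\binom{m - m_r n^r}{k - k_r n^r}$ modulo $n$. Alternatively, the single-digit form can be obtained directly from the same computation by separating only the factor $\left(1 + x^{n^r}\right)^{m_r}$ while leaving $(1+x)^{m - m_r n^r}$ unexpanded.
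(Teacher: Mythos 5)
Your argument is correct: the freshman's-dream identity $(1+x)^{n^i}\equiv 1+x^{n^i}\pmod n$, the factorization of $(1+x)^m$ according to the base-$n$ digits of $m$, and the coefficient extraction using uniqueness of base-$n$ representations (with the digits $j_i$ pinned to $k_i$ because $j_i\le m_i\le n-1$) together give the full product congruence, and your two ways of recovering the single-digit form from it both work. There is, however, nothing in the paper to compare this against: the authors state Lucas' Theorem as a classical result with a citation to Lucas and give no proof of their own, using it only as a black box that their Theorems 2.3 and 2.5 then refine. What you have written is the standard generating-function proof, and it is sound; the only point worth making explicit is the degenerate case where some digit satisfies $k_i>m_i$, in which no monomial on the right contributes to $x^k$ and the product $\prod_i\binom{m_i}{k_i}$ vanishes accordingly, so the congruence still holds with both sides zero.
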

Although, Lucas theorem is very powerful in reducing the computational complexity of $\binom{m}{k}$ in modulo $n$ when $n$ is a prime and $m \geq n$,
it is of no use when $m<n$ in which case $\binom{m}{k}$ can be very big comparing to $n$.
Moreover, the application of Lucas Theorem comes down to the computation of some other binomial coefficients of numbers that are smaller than $n$.  
For example, 
$$\binom{15683463}{10824637} \equiv \binom{248}{171} \binom{235}{205} \binom{230}{11}\quad \mod \, \, \, 251.$$
We still need to compute $\binom{248}{171}$, $\binom{235}{205}$ and $\binom{230}{11}$.

The symmetry property $\binom{m}{k}=\binom{m}{m-k}$ is also useful. Namely, it will be enough to compute the values of $\binom{m}{k}$ only for those $k$ in $\{ 0, \, 1, \, \cdots, \lfloor \frac{m}{2} \rfloor \}$. 

In this paper, we give elementary proofs of some other symmetry criteria so that the computations of binomial coefficients 
$\binom{m}{k}$ in modulo a prime number will be much more simpler. 
In \secref{sec main} (see \thmref{thm main1} and \thmref{thm main2}), we obtained the following equivalences, where $n$ is a prime, $k$ and $s$ are integers with $0 \leq s \leq k < n$:
\begin{equation}\label{eqn congruences}
\begin{split}
&(-1)^{k+s}\binom{n-1-s}{n-1-k}= (-1)^{k+s}\binom{n-1-s}{k-s} \equiv \binom{k}{s} \\
&= \binom{k}{k-s} \equiv (-1)^{s}\binom{n-1-k+s}{s}= (-1)^{s}\binom{n-1-k+s}{n-1-k} \quad \mod \, \, \, n.
\end{split}
\end{equation}

We can use \eqref{eqn congruences} to derive the following extended version of Lucas theorem (by using \eqref{eqn algo}):
\begin{theorem}[Extended Lucas Theorem]\label{thm Lucas ext}
For a given prime number $n$, let $m \geq n$ be an integer and let $k$ be an integer with $ 0 \leq k \leq m$.
Suppose $m=\sum_{i=0}^r m_i n^i$ and $k=\sum_{i=0}^r k_i n^i$ are expansions of $m$ and $k$ in base $n$ so that $r$ is an integer of appropriate size and $0 \leq m_i <n $ and $0 \leq k_i <n$ are integers for each $i=0, \, 1, \, \cdots, r$.
We set $t_i=\min \{ k_i, \, m_i-k_i \}$ for each $i=0, \, 1, \, \cdots, r$. 
Let us define  $\binom{m_i}{t_i}_n$ as follows
\begin{equation*}\label{eqn algorithm}
\begin{split}
\binom{m_i}{t_i}_n :=
\begin{cases} (-1)^{t_i} \binom{n-1-m_i+t_i}{n-1-m_i} , & \text{if $m_i+t_i > n- 1$}\\
(-1)^{t_i} \binom{n-1-m_i+t_i}{t_i}, & \text{if $m_i+t_i \leq n- 1$ and $2 m_i > n-1 +t_i$ }\\
\binom{m_i}{t_i}, & \text{otherwise 
}.
\end{cases}
\end{split}
\end{equation*}
Then we have
\begin{equation*}
\binom{m}{k} \equiv \binom{m_r}{t_r}_n \binom{m-m_r n^r}{k-k_r n^r} \quad \mod \, \, \, n.
\end{equation*}
Moreover,
\begin{equation*}
\binom{m}{k} \equiv \binom{m_r}{t_r}_n \binom{m_{r-1}}{t_{r-1}}_n \cdots \binom{m_1}{t_1}_n \binom{m_0}{t_0}_n \quad \mod \, \, \, n.
\end{equation*}
\end{theorem}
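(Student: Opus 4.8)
The plan is to reduce \thmref{thm Lucas ext} to the ordinary Lucas Theorem (\thmref{thm Lucas}) by replacing, one factor at a time, each small binomial coefficient $\binom{m_i}{k_i}$ occurring there with $\binom{m_i}{t_i}_n$, and showing the two agree modulo $n$. Since \thmref{thm Lucas} already provides both the one-step reduction and the fully factored congruence, the two displays in the statement follow by substituting these factorwise replacements. Thus the whole of the proof rests on the single congruence
\begin{equation*}
\binom{m_i}{k_i} \equiv \binom{m_i}{t_i}_n \pmod{n}, \qquad i = 0, 1, \dots, r.
\end{equation*}

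First I would fix $i$. If $k_i > m_i$ then $\binom{m_i}{k_i} = 0$ already makes $\binom{m}{k} \equiv 0 \pmod{n}$, a degenerate case I would treat separately; so I assume $0 \le k_i \le m_i$. Then $t_i = \min\{k_i, m_i - k_i\}$ is nonnegative, satisfies $t_i \le m_i < n$, and, because $t_i \le m_i - t_i$, also obeys $2t_i \le m_i$ (a bound I expect to need in comparing the branches). Using the symmetry property $\binom{m_i}{k_i} = \binom{m_i}{m_i - k_i}$ and the fact that $t_i$ equals whichever of $k_i$, $m_i - k_i$ is the smaller, I would record that $\binom{m_i}{k_i} = \binom{m_i}{t_i}$ as an identity of integers, prior to any reduction.

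Next I would invoke the congruence chain \eqref{eqn congruences} with $k \mapsto m_i$ and $s \mapsto t_i$, which is legitimate since $n$ is prime and $0 \le t_i \le m_i < n$. It yields
\begin{equation*}
\binom{m_i}{t_i} \equiv (-1)^{t_i}\binom{n-1-m_i+t_i}{t_i} = (-1)^{t_i}\binom{n-1-m_i+t_i}{n-1-m_i} \pmod{n},
\end{equation*}
the last equality being symmetry applied to the top $n-1-m_i+t_i = t_i + (n-1-m_i)$. It then remains to read off the three branches of the definition of $\binom{m_i}{t_i}_n$: when $m_i + t_i > n-1$ the prescribed value is $(-1)^{t_i}\binom{n-1-m_i+t_i}{n-1-m_i}$, when $m_i + t_i \le n-1$ and $2m_i > n-1+t_i$ it is $(-1)^{t_i}\binom{n-1-m_i+t_i}{t_i}$, and otherwise it is $\binom{m_i}{t_i}$. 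In each branch the value is one of the three mutually congruent quantities above, so $\binom{m_i}{k_i} = \binom{m_i}{t_i} \equiv \binom{m_i}{t_i}_n \pmod{n}$, as needed.

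I do not expect a genuine obstacle in the congruence itself, which is a direct corollary of \eqref{eqn congruences} and symmetry; the only thing deserving care is the bookkeeping behind the three-way split. The branches are not what make the congruence hold---any one of the three representatives would already do---but serve to select the cheapest to evaluate: the reflected top $n-1-m_i+t_i$ undercuts the original top $m_i$ exactly when $2m_i > n-1+t_i$, which separates the last branch from the first two, while within the reflected form the smaller of the two admissible bottoms $t_i$ and $n-1-m_i$ is chosen according to the sign of $m_i + t_i - (n-1)$, separating the first branch from the second. Here I would use $2t_i \le m_i$ to confirm the branches are consistent and exhaustive; for instance $m_i + t_i > n-1$ together with $m_i \ge 2t_i$ gives $2m_i \ge m_i + 2t_i > n-1+t_i$, so the first branch always lands in the regime where the reflected top is the smaller one. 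This verification closes the reduction to \thmref{thm Lucas}.
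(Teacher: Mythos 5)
Your proposal is correct and follows essentially the same route the paper intends: apply the ordinary Lucas Theorem digit-wise and then replace each factor $\binom{m_i}{k_i}$ by $\binom{m_i}{t_i}_n$ via the symmetry property together with the congruence chain \eqref{eqn congruences} (equivalently, the case formula \eqref{eqn algo}). Your added checks --- the degenerate case $k_i>m_i$ and the observation that $m_i+t_i>n-1$ forces $2m_i>n-1+t_i$ --- match the paper's remark that Step 2 always triggers Step 3.
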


Observations given in \eqref{eqn congruences} are not only helpful for the computation of binomial coefficients in modulo a prime number, but also crucial to devise a new method of proving identities and congruences that involve summations of binomial coefficients.
In  \secref{sec identities}, we use \eqref{eqn congruences} either to give new proofs of a known identity or to derive new identities. Similarly, we use \eqref{eqn congruences} to establish (new) congruences in \secref{sec congruences}.

Having completed our paper, we noticed that symmetries in Pascal's triangle are studied and used in various articles such as \cite[Section 4]{GA}, \cite{JLXV}, \cite{K}, \cite{LC}, \cite{P}, \cite{R}, \cite{SB}. Almost all of them consider Pascal's triangle in modulo $n^k$ for a prime $n$, and they consider Pascal's triangle with $n^k$ or more rows. Many of them uses Lucas Theorem as a tool for their study. However, we focus only on the first $n-1$ rows, and work in modulo $n$. We use the symmetries to extend Lucas theorem and to devise a method of finding identities and congruences that involve Binomial coefficients.

\section{Congruences of Binomial Coefficients}\label{sec main}

In this section, we establish some congruence criteria that we call semi-symmetry properties of binomial coefficients in modulo a prime. These are given in \thmref{thm main1} and \thmref{thm main2}. We illustrate these criteria on Pascal's triangle. Then we describe an algorithm to apply these criteria to get a simplified equivalent form of a given binomial coefficients. This allows us to extend Lucas theorem. Then we give some examples.

The following simple equality will be very helpful.
For each integer $n$, $k$ and $s$ with $0 \leq s \leq k \leq n$, we have the cancellation identity \cite[pg 29]{BW}:
\begin{equation}\label{eqn product1}
\begin{split}
\binom{n}{k} \binom{k}{s}=\binom{n}{s}\binom{n-s}{k-s}.
\end{split}
\end{equation}
Let us recall the following well known fact.
\begin{theorem}\label{thm Leibniz2}
A positive integer $n$ is prime iff $\binom{n-1}{i} \equiv (-1)^i \quad \mod \, \, \, n$ for all $0 \leq i \leq n-1$.
\end{theorem}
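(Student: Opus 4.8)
The plan is to prove the two implications separately. For the forward direction (if $n$ is prime then the congruence holds), I would write $\binom{n-1}{i} = \frac{(n-1)(n-2)\cdots(n-i)}{i!}$ and reduce the numerator modulo $n$. Since $n-j \equiv -j \pmod n$ for each $j$, the numerator satisfies $(n-1)(n-2)\cdots(n-i) \equiv (-1)^i\, i! \pmod n$, so that $i!\,\binom{n-1}{i} \equiv (-1)^i\, i! \pmod n$. Because $n$ is prime and $0 \le i \le n-1$, the factor $i!$ is invertible modulo $n$; cancelling it yields $\binom{n-1}{i} \equiv (-1)^i \pmod n$, as desired. (Alternatively, one can induct on $i$ using Pascal's rule together with the fact that $n \mid \binom{n}{i}$ for $1 \le i \le n-1$ when $n$ is prime.)

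For the converse I would argue by contraposition: assuming $n$ is composite, I would exhibit an index $i$ for which $\binom{n-1}{i} \not\equiv (-1)^i \pmod n$. The convenient reformulation comes from Pascal's rule $\binom{n}{i} = \binom{n-1}{i} + \binom{n-1}{i-1}$: the congruences $\binom{n-1}{i} \equiv (-1)^i$ (for all $0 \le i \le n-1$) hold if and only if $\binom{n}{i} \equiv 0 \pmod n$ for every $1 \le i \le n-1$, since two consecutive signs sum to $(-1)^i + (-1)^{i-1} = 0$, while an easy induction from $\binom{n-1}{0}=1$ recovers all the signs in the reverse direction. In particular, if $\binom{n}{p} \not\equiv 0 \pmod n$ for some $p$, then not both of $\binom{n-1}{p} \equiv (-1)^p$ and $\binom{n-1}{p-1} \equiv (-1)^{p-1}$ can hold. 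Thus it suffices to produce a single $i \in \{1,\dots,n-1\}$ with $n \nmid \binom{n}{i}$.

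To locate such an $i$, let $p$ be any prime dividing the composite number $n$ and take $i = p$ (note $p \le n-1$). Writing $\binom{n}{p} = \frac{n}{p}\binom{n-1}{p-1}$, I would compute the $p$-adic valuation: since $p \mid n$, each factor in $\binom{n-1}{p-1} = \frac{(n-1)\cdots(n-p+1)}{(p-1)!}$ is $\equiv -j \pmod p$ with $1 \le j \le p-1$, so $p$ divides neither the numerator nor $(p-1)!$, giving $v_p\!\left(\binom{n-1}{p-1}\right) = 0$. Hence $v_p\!\left(\binom{n}{p}\right) = v_p(n) - 1 < v_p(n)$, so $n \nmid \binom{n}{p}$, completing the contrapositive.

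The main obstacle I expect is the converse direction: the forward implication is a routine congruence manipulation, whereas the converse requires choosing the right witness $i$ and running a valuation argument at a prime factor of $n$. The key step to get right is translating the statement about $\binom{n-1}{i}$ into the cleaner divisibility condition $n \mid \binom{n}{i}$ via Pascal's rule, after which the valuation computation at a prime factor of $n$ settles the matter. (One should also note the degenerate case $n=1$, where the congruence holds vacuously; the statement is intended for $n \ge 2$.)
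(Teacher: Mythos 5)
Your proof is correct in both directions. Note, however, that the paper does not actually prove this statement: it is quoted as a well-known fact, with the remark that the ``only if'' direction is due to Lucas (1879), so there is no in-paper argument to compare against. Your forward direction (reducing the numerator of $\binom{n-1}{i}=\frac{(n-1)(n-2)\cdots(n-i)}{i!}$ factor by factor and cancelling the unit $i!$ modulo the prime $n$) is the standard one. Your converse is the genuinely nontrivial part and is handled correctly: the translation via Pascal's rule to the condition $n\mid\binom{n}{i}$ for all $1\le i\le n-1$, followed by the witness $i=p$ for a prime $p\mid n$ and the valuation computation $v_p\bigl(\binom{n}{p}\bigr)=v_p(n)-1$, is a clean and complete argument. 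You are also right to flag $n=1$, for which the stated equivalence is vacuously problematic and which your contrapositive (requiring a proper prime divisor) silently excludes; the theorem should be read for $n\ge 2$.
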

In fact, only if part in \thmref{thm Leibniz2} was proved by Lucas \cite{L} in 1879.

Next, we give the first of our congruence criteria.
\begin{theorem}\label{thm main1}
Let $n$ be a prime. For each integers $k$ and $s$ with $0 \leq s \leq k < n$, we have
\begin{equation*}
\begin{split}
\binom{k}{s}    \equiv (-1)^{k+s} \binom{n-1-s}{k-s} =  (-1)^{k+s} \binom{n-1-s}{n-1-k}  \, \, \, \mod \, \, n.
\end{split}
\end{equation*}
\end{theorem}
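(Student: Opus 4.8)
The plan is to combine the cancellation identity \eqref{eqn product1} with the prime characterization in \thmref{thm Leibniz2}, and the second (stated) equality is just elementary symmetry of binomial coefficients. First I would apply \eqref{eqn product1} with its top index specialized to $n-1$. Since the hypothesis gives $0 \leq s \leq k < n$, and $k$ is an integer, we have $k \leq n-1$, so the index constraint $0 \leq s \leq k \leq n-1$ needed to invoke \eqref{eqn product1} is satisfied. The identity then reads
\begin{equation*}
\binom{n-1}{k}\binom{k}{s} = \binom{n-1}{s}\binom{n-1-s}{k-s}.
\end{equation*}

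Next, because $n$ is prime, \thmref{thm Leibniz2} lets me replace each binomial coefficient with top index $n-1$ by a sign modulo $n$: namely $\binom{n-1}{k} \equiv (-1)^k$ and $\binom{n-1}{s} \equiv (-1)^s \mod n$. Substituting these two congruences into the displayed identity yields
\begin{equation*}
(-1)^k \binom{k}{s} \equiv (-1)^s \binom{n-1-s}{k-s} \quad \mod \, \, \, n.
\end{equation*}

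Finally, I would multiply both sides by $(-1)^k$, which is legitimate modulo $n$ because $(-1)^k = \pm 1$ is a unit. On the left this produces $(-1)^{2k}\binom{k}{s} = \binom{k}{s}$, and on the right it produces $(-1)^{k+s}\binom{n-1-s}{k-s}$, which is exactly the asserted congruence. The remaining equality $\binom{n-1-s}{k-s} = \binom{n-1-s}{n-1-k}$ is immediate from the ordinary symmetry $\binom{N}{j} = \binom{N}{N-j}$ applied with $N = n-1-s$ and $j = k-s$, since $N - j = (n-1-s)-(k-s) = n-1-k$.

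I do not anticipate any genuine obstacle here: the argument is a two-line manipulation once the right specialization is chosen. The only points that merit care are verifying that the index bounds in \eqref{eqn product1} actually hold under the hypothesis $k < n$ (so that the cancellation identity may be used at all), and observing that the sign $(-1)^k$ is invertible modulo $n$ so the concluding cancellation is valid. Primality enters only through \thmref{thm Leibniz2}, and nothing further about $n$ is needed.
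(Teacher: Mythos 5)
Your proposal is correct and follows essentially the same route as the paper: specialize the cancellation identity \eqref{eqn product1} to top index $n-1$, invoke \thmref{thm Leibniz2} to replace $\binom{n-1}{k}$ and $\binom{n-1}{s}$ by $(-1)^k$ and $(-1)^s$ modulo $n$, and conclude. Your additional remarks about index bounds and the invertibility of $(-1)^k$ only make explicit what the paper leaves implicit.
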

\begin{proof}
By \eqnref{eqn product1}, we have
\begin{equation*}\label{eqn product1b}
\begin{split}
\binom{n-1}{k} \binom{k}{s}=\binom{n-1}{s}\binom{n-1-s}{k-s}.
\end{split}
\end{equation*}
Since $n$ is a prime, we have $\binom{n-1}{k} \equiv (-1)^k \, \, \, \mod \, \, n$, and similarly $\binom{n-1}{s} \equiv (-1)^s \, \, \, \mod \, \, n$. The result follows from these congruences and the equality. 
\end{proof}
We noticed that the congruence given in \thmref{thm main1} is obtained via different method in \cite[Corollary 7]{R}.
The following result was proved in \cite[Lemma 4]{P} by induction.
\begin{corollary}\label{cor main1}
Let $n$ be a prime. For $1 \leq i \leq n$ and $0 \leq k \leq n-i$ the following congruence holds:
\begin{equation*}\label{eqn cor1}
\begin{split}
\binom{n-i}{k} \equiv (-1)^k \binom{i-1+k}{i-1} \, \, \, \mod \, \, n.
\end{split}
\end{equation*}
\end{corollary}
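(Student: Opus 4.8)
The plan is to deduce the corollary as a direct specialization of \thmref{thm main1}, avoiding the induction used in \cite[Lemma 4]{P}, since the quantity $\binom{n-i}{k}$ already has the shape of the term $\binom{n-1-s}{k-s}$ appearing on the right-hand side of that theorem. To avoid a clash of symbols, I would regard the variables in \thmref{thm main1} as $K$ and $S$, so that the congruence reads $\binom{K}{S} \equiv (-1)^{K+S}\binom{n-1-S}{K-S} \mod n$ whenever $0 \le S \le K < n$.

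First I would choose $S = i-1$ and $K = k+i-1$. With these choices the outer binomial becomes $\binom{n-1-S}{K-S} = \binom{n-i}{k}$, which is exactly the coefficient we want to control. Before substituting, I would check that the hypothesis $0 \le S \le K < n$ of \thmref{thm main1} is met: the inequality $0 \le S$ is $1 \le i$, the inequality $S \le K$ is $0 \le k$, and the inequality $K < n$ is $k+i-1 < n$, that is, $k \le n-i$. These are precisely the ranges $1 \le i \le n$ and $0 \le k \le n-i$ assumed in the corollary, so the theorem applies, including the boundary case $i=n$ (forcing $k=0$), which reduces to the trivial identity $\binom{0}{0}\equiv 1$.

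Substituting into \thmref{thm main1} then yields $\binom{k+i-1}{i-1} \equiv (-1)^{(k+i-1)+(i-1)}\binom{n-i}{k} \mod n$. The sign exponent is $k+2(i-1)$, so $(-1)^{(k+i-1)+(i-1)} = (-1)^k$, and multiplying both sides by $(-1)^k$ gives $(-1)^k\binom{i-1+k}{i-1} \equiv \binom{n-i}{k} \mod n$, which is the asserted congruence.

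Since the argument is a single substitution, there is no serious obstacle; the only point requiring care is the index bookkeeping, namely verifying that the upper constraint $K < n$ of \thmref{thm main1} corresponds exactly to the bound $k \le n-i$ of the corollary, and that the parity of the sign exponent collapses to $(-1)^k$. As a consistency check one may also match the alternate form $\binom{n-1-S}{n-1-K} = \binom{n-i}{\,n-i-k\,} = \binom{n-i}{k}$, which confirms that the same substitution is compatible with the symmetric expression appearing in \thmref{thm main1}.
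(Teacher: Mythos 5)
Your proof is correct and follows essentially the same route as the paper: both apply \thmref{thm main1} with the substitution $s \mapsto i-1$, $k \mapsto i-1+k$, verify the hypothesis $0 \le i-1 \le i-1+k < n$ from the corollary's assumptions, and collapse the sign $(-1)^{k+2(i-1)}$ to $(-1)^k$. No further comment is needed.
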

\begin{proof}
With the given conditions on $i$ and $k$, we have $i-1+k \leq n-1$ and $0 \leq i-1 < n.$ Thus, we apply
\thmref{thm main1} to obtain
\begin{equation*}\label{eqn cor2}
\begin{split}
\binom{i-1+k}{i-1} \equiv (-1)^{i-1+k+i-1} \binom{n-1-(i-1)}{i-1+k-(i-1)} =(-1)^k \binom{n-i}{k} \, \, \, \mod \, \, n.
\end{split}
\end{equation*}
This completes the proof.
\end{proof}
Another result with the same flavor can be given as follows: 
\begin{theorem}\label{thm main2}
Let $n$ be a prime. For each integers $k$ and $s$ with $0 \leq s \leq k < n$, we have
\begin{equation*}
\begin{split}
\binom{k}{s} \equiv  (-1)^{s} \binom{n-1-k+s}{s} = (-1)^{s} \binom{n-1-k+s}{n-1-k} \, \, \, \mod \, \, n.
\end{split}
\end{equation*}
\end{theorem}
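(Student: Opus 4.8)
The plan is to mirror the proof of \thmref{thm main1}, but to feed into it the reflected form of the left-hand coefficient rather than the coefficient itself. The key observation is that \thmref{thm main1} already converts a coefficient $\binom{K}{S}$ into one of shape $(-1)^{K+S}\binom{n-1-S}{K-S}$; so to land on $\binom{n-1-k+s}{s}$ instead of $\binom{n-1-s}{k-s}$, I should apply that reduction to the pair $(K,S)=(k,k-s)$ rather than $(k,s)$. This single change of variables is what routes the argument to the companion congruence of \thmref{thm main2}.

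First I would record the ordinary symmetry $\binom{k}{s}=\binom{k}{k-s}$, valid since $0\le k-s\le k$. The pair $(k,k-s)$ meets the hypotheses of \thmref{thm main1}, namely $0\le k-s\le k<n$, so I may invoke that theorem with $s$ replaced by $k-s$, obtaining $\binom{k}{k-s}\equiv(-1)^{k+(k-s)}\binom{n-1-(k-s)}{k-(k-s)}\pmod n$. Then I would simplify: the exponent collapses via $(-1)^{k+(k-s)}=(-1)^{2k-s}=(-1)^{s}$, and the binomial becomes $\binom{n-1-k+s}{s}$, which yields $\binom{k}{s}\equiv(-1)^{s}\binom{n-1-k+s}{s}\pmod n$. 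The remaining displayed equality $\binom{n-1-k+s}{s}=\binom{n-1-k+s}{n-1-k}$ is once more the plain symmetry $\binom{N}{r}=\binom{N}{N-r}$ with $N=n-1-k+s$ and $r=s$, legitimate because $0\le s\le N$ follows from $k\le n-1$.

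Should a reader prefer an argument not citing \thmref{thm main1}, the identical computation runs straight from the cancellation identity \eqnref{eqn product1}: setting the top index to $n-1$ and the lower indices to $k$ and $k-s$ gives $\binom{n-1}{k}\binom{k}{k-s}=\binom{n-1}{k-s}\binom{n-1-k+s}{s}$, and then \thmref{thm Leibniz2} (i.e.\ $\binom{n-1}{i}\equiv(-1)^{i}$) together with cancellation of the unit $(-1)^{k}$ finishes the proof. This makes the whole claim an immediate consequence of one well-chosen specialization of \eqnref{eqn product1}.

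There is no serious obstacle here; the only points demanding care are the sign bookkeeping and the strategic decision to reflect $\binom{k}{s}$ to $\binom{k}{k-s}$ \emph{before} applying the reduction, since applying \thmref{thm main1} directly to $(k,s)$ merely reproduces \thmref{thm main1} itself rather than the target congruence. A pleasant feature of this route is that no factor of $(-1)^{n-1}$ ever enters, so the argument is uniform in the prime $n$ and requires no separate check of the case $n=2$.
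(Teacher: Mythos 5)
Your proof is correct and follows essentially the same route as the paper's: reflect $\binom{k}{s}$ to $\binom{k}{k-s}$ by ordinary symmetry, apply \thmref{thm main1} with $s$ replaced by $k-s$, and collapse the sign $(-1)^{2k-s}=(-1)^{s}$. The alternative you sketch via \eqnref{eqn product1} and \thmref{thm Leibniz2} is just the same computation with \thmref{thm main1} inlined, so nothing further is needed.
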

\begin{proof}
By the symmetry property, $\binom{k}{s}= \binom{k}{k-s}$.
Then by \thmref{thm main1}, 
$$\binom{k}{k-s} \equiv (-1)^{2k-s}\binom{n-1-k+s}{s} \equiv (-1)^{s}\binom{n-1-k+s}{s}  \, \, \, \mod \, \, n.$$ 
This gives what we want, since the equality in the theorem follows from the symmetry.
\end{proof}

Next we give some examples. By \thmref{thm main1} and the symmetry, we have
$$\binom{248}{171}=\binom{248}{77}=\binom{250-2}{79-2} \equiv (-1)^{81} \binom{79}{2} \equiv -3081 \equiv 182 \, \, \, \mod \, \, 251.$$ 
Similarly,
$$\binom{235}{205}= \binom{235}{30}= \binom{250-15}{45-15} \equiv (-1)^{60} \binom{45}{15}=344867425584 \equiv 27  \quad \mod \, \, \, 251,$$
and
$$\binom{230}{11}= \binom{250-20}{31-20} \equiv (-1)^{51} \binom{31}{20}=-\binom{31}{11}=-84672315 \equiv 25  \quad \mod \, \, \, 251.$$

We can use Lucas Theorem along with our extensions: 
\begin{equation*}\label{}
\begin{split}
\binom{15683463}{10824637} & \equiv \binom{248}{171} \binom{235}{205} \binom{230}{11}
\equiv \binom{79}{2} \binom{45}{15} \binom{31}{11} \\
& \equiv 182 \cdot 27 \cdot 25 = 122850 \equiv 111 \quad \mod \, \, \, 251.
\end{split}
\end{equation*}

%

\begin{figure}
\centering
\includegraphics[scale=0.86]{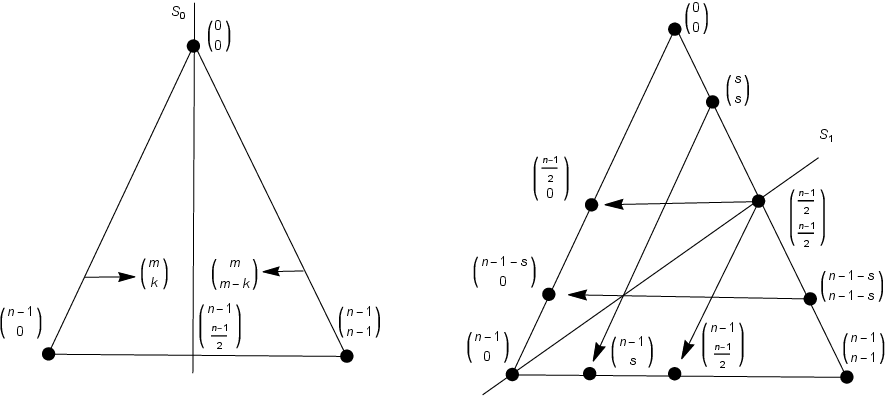} \caption{} \label{fig s0s1}
\end{figure}

The figure on the left in \figref{fig s0s1} shows the Symmetry Property $\binom{m}{k}=\binom{m}{m-k}$ and the symmetry line $S_0$ in the Pascal's Triangle, and the figure on the right in \figref{fig s0s1} shows the Semi-Symmetry Property in \thmref{thm main1}, $\binom{k}{s} \equiv (-1)^{k+s} \binom{n-1-s}{n-1-k}$ and the symmetry line $S_1$ in the Pascal's Triangle in modulo a prime $n$.



\begin{figure}
\centering
\includegraphics[scale=0.86]{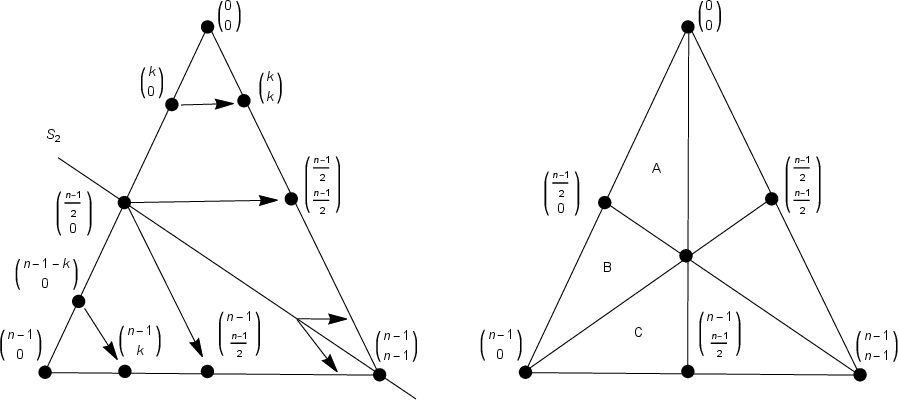} \caption{} \label{fig s2s3}
\end{figure}

The figure on the left in \figref{fig s2s3} shows the Semi-Symmetry Property described by \thmref{thm main2}, $\binom{k}{s} \equiv (-1)^s \binom{n-1-k+s}{s}$ and the symmetry line $S_2$ in the Pascal's Triangle in modulo a prime $n$.

The figure on the right in \figref{fig s2s3} shows the symmetry lines and some certain regions of Pascal's triangle. These regions are critical when we apply symmetry and semi-symmetry properties. In this figure,

Binomial coefficients $\binom{m}{k}$ on the symmetry line $S_0$, $S_1$ and $S_2$ are given by the conditions $k=m-k$,
$m+k=n-1$ and $2m=n-1+k$, respectively.
Therefore, $\binom{m}{k}$ is in the region $A$ if $k \leq m-k$ and $2m \leq n-1+k$. Likewise, 
$\binom{m}{k}$ is in the region of $B$ if $k \leq m-k$, $m + k \leq n-1$ and  $2m \geq n-1+k$.
Moreover, $\binom{m}{k}$ is in the region of $C$ if $k \leq m-k$, $m + k \geq n-1$.

\textbf{Question:} Given a binomial coefficient $\binom{m}{k}$ and a prime $n$ with $0 \leq k \leq m < n$, how can we apply symmetry and semi-symmetry properties effectively to get a simpler equivalent form of $\binom{m}{k}$ in modulo $n$? 

One can do this in various ways. Here is one possible method:

\textbf{Step 1:} If $k > m-k$, apply the symmetry property, i.e., replace $\binom{m}{k}$ by $\binom{m}{m-k}$.

\textbf{Step 2:} We are given $\binom{m}{k}$ with $k \leq m-k$ as step 1 is applied. If $m + k > n-1$, apply the semi-symmetry property given in \thmref{thm main1}, i.e., replace $\binom{m}{k}$ by $(-1)^{m+k} \binom{n-1-k}{n-1-m}$.

That is, if a binomial coefficient lies in the region $C$ of the Pascal's triangle in the second figure of \figref{fig s2s3}, applying step 2 moves it to the region $A \cup B$. Note that the conditions $k \leq m-k$ and $m + k > n-1$ ensures that $n-1-m < (n-1-k)- (n-1-m)$.

\textbf{Step 3:} We are given $\binom{m}{k}$ with $k \leq m-k$ and $m + k \leq n-1$ as step 1 and step 2 are applied.
If $2 m > n-1+k$, apply the semi-symmetry property given in \thmref{thm main2}, i.e., replace $\binom{m}{k}$ by $(-1)^{k} \binom{n-1-m+k}{k}$.

That is, if a binomial coefficient lies in the region $B$ of the Pascal's triangle in the second figure of \figref{fig s2s3}, applying step 3 moves it to the region $A$. Hence, the region $A$ is the fundamental region in Pascal's triangle modulo a prime.

Note that if step 2 is applied, then step 3 must be applied afterwards. To see this, one can check that output of step 2 (under the conditions of step 2) satisfies conditions of step 3.
If step 2 is not applied, we may need to apply step 3 or not. This will lead to the following shorter 
answer to the question above:

Given $\binom{m}{k}$, set $t:=\min \{ k, \, m-k \}$. Then we have the following congruence in modulo $n$:
\begin{equation}\label{eqn algo}
\begin{split}
\binom{m}{k} \equiv
\begin{cases} (-1)^t \binom{n-1-m+t}{n-1-m} , & \text{if $m+t > n- 1$}\\
(-1)^t \binom{n-1-m+t}{t}, & \text{if $m+t \leq n- 1$ and $2 m > n-1 +t$ }\\
\binom{m}{t}, & \text{otherwise (i.e. $m+t \leq n- 1$ and $2 m \leq n-1 +t$ ) }.
\end{cases}
\end{split}
\end{equation}
Next, we give some examples:

Compute $\binom{33}{20}$ in modulo $37$.

By step 1, $\binom{33}{20} = \binom{33}{13}$. Then

\begin{equation*}\label{eqn example2}
\begin{split}
\binom{33}{13}&\equiv (-1)^{33+13} \binom{23}{3} \, \, \, \mod \, \, \, 37, \quad \text{by step 2 as $13 < 33-13$ and $33+13 > 37-1$}.\\
&\equiv (-1)^{3}\binom{16}{3}  \equiv 32 \, \, \, \mod \, \, \, 37, \quad \text{by step 3 as $2  \cdot 23 \geq 37-1+3$}.
\end{split}
\end{equation*}

Compute $\binom{87}{40}$ in modulo $101$.
\begin{equation*}\label{eqn example3}
\begin{split}
\binom{87}{40} & \equiv (-1)^{87+40} \binom{60}{13} \, \, \, \mod \, \, \, 101, \quad \text{by step 2 as $40 < 87-40$ and $87+40 > 101-1$}.\\
&\equiv -(-1)^{13}\binom{53}{13}  \equiv  40 \, \, \, \mod \, \, \, 101, \quad \text{by step 3 as $2  \cdot 60 \geq 101-1+13$}.
\end{split}
\end{equation*}

\begin{figure}
\centering
\includegraphics[scale=0.85]{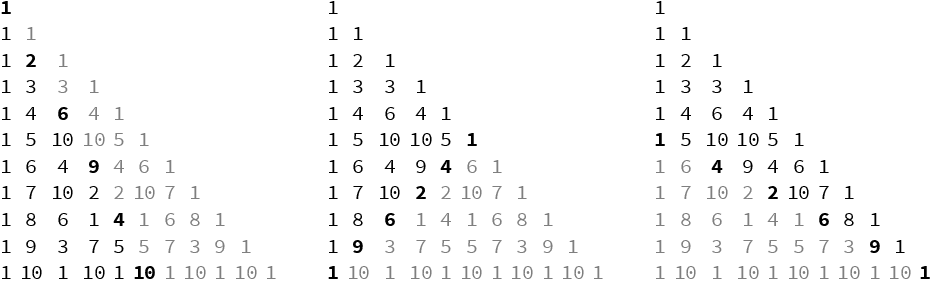} \caption{} \label{fig pt}
\end{figure}

When $n=11$, the symmetry line $S_0$ and the semi-symmetry lines $S_1$, $S_2$ are illustrated in \figref{fig pt}. Furthermore, applications of above steps in this case are illustrated in \figref{fig pt2}. 

\begin{figure}
\centering
\includegraphics[scale=0.87]{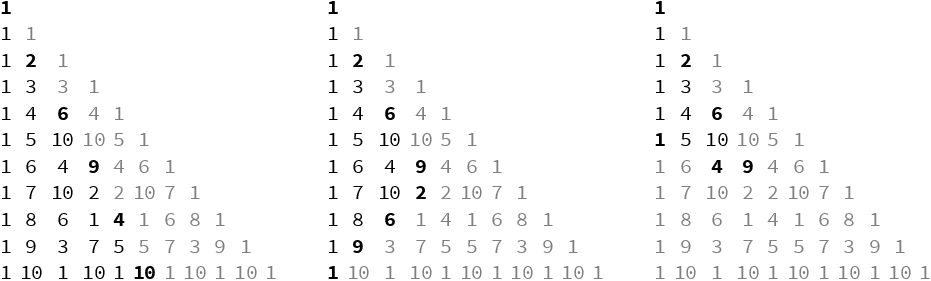} \caption{} \label{fig pt2}
\end{figure}

We used \cite{MMA} to prepare \figref{fig pt} and \figref{fig pt2}.

The rest of the paper is about applications of \thmref{thm main1} and \thmref{thm main2}.
The identities shown in \secref{sec identities} and the congruences given in \secref{sec congruences} are for the illustration purposes only. One can apply our method to various other cases.

\section{Obtaining identities}\label{sec identities}
In this section, we illustrate a method as an application of \thmref{thm main1} and \thmref{thm main2}.
It is about proving the summation identities involving binomial coefficients. Given such an identity, we apply any of these two theorems with a prime $p$ to rewrite the identity. This gives a congruence in modulo $p$. If we can choose the prime in such a way that the terms in the congruence are independent of $p$, the congruence is in fact an identity. Of course, the identity derived can be a known identity or a new identity. In any case, this gives a new effective method. Actually, this method shows that there is a duality between such identities.

The first equality in the following theorem was known (set $x=n$ in the equation $3.50$ in \cite{G}). Here, we derive it with a new proof.
\begin{theorem}\label{thm Vandermonde like1}
Let $n$ be a nonnegative integer. Then we have
\begin{equation*}\label{}
\begin{split}
\sum_{j=0}^n (-1)^j\binom{n}{j}\binom{2n-j}{n}=1, \quad \sum_{j=0}^n (-1)^j\binom{2n-j}{j}\binom{2n-2j}{n-j}=1.
\end{split}
\end{equation*}
\end{theorem}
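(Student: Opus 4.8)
The plan is to prove the second identity as a consequence of the first, and to prove the first by the semi-symmetry method advertised in this section. The bridge between the two sums is the cancellation identity \eqnref{eqn product1}: applying it with top entry $2n-j$ and inner entries $n$ and $j$ gives $\binom{2n-j}{n}\binom{n}{j}=\binom{2n-j}{j}\binom{2n-2j}{n-j}$ for every $j$ with $0\le j\le n$. Multiplying by $(-1)^j$ and summing over $j$ shows that the two sums agree term by term, so it suffices to establish the first identity; this term-by-term coincidence is precisely the ``duality'' between the two identities.

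For the first identity, fix a prime $p>2n$. For $0\le j\le n$ we have $n\le 2n-j<p$, so \thmref{thm main1} applies to $\binom{2n-j}{n}$ and yields $\binom{2n-j}{n}\equiv(-1)^{3n-j}\binom{p-1-n}{n-j}\pmod p$. Substituting this and absorbing the sign (noting $(-1)^{j}(-1)^{3n-j}=(-1)^{n}$), the sum becomes $(-1)^n\sum_{j=0}^n\binom{n}{j}\binom{p-1-n}{n-j}\pmod p$. I would then recognize the inner sum as a Vandermonde convolution, which collapses to $\binom{p-1}{n}$, and invoke \thmref{thm Leibniz2} to get $\binom{p-1}{n}\equiv(-1)^n\pmod p$. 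Hence the whole sum is $\equiv(-1)^n(-1)^n=1\pmod p$.

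Since the left-hand side is a fixed integer depending only on $n$ and is congruent to $1$ modulo every prime $p>2n$, their difference is divisible by infinitely many primes and therefore vanishes; this is exactly the $p$-independence step that converts the congruence into the claimed identity. The main obstacle is this last passage together with the bookkeeping that legitimizes it: one must check that for $p>2n$ all binomial coefficients in sight have arguments in the admissible ranges, so that \thmref{thm main1}, the Vandermonde convolution, and \thmref{thm Leibniz2} each apply, and that the resulting right-hand side is genuinely free of $p$. Once these range conditions are verified the computation is routine, and the cancellation identity \eqnref{eqn product1} then delivers the second identity at no extra cost.
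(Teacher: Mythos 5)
Your argument is correct and is essentially the paper's own proof run in the opposite direction: the paper starts from the Vandermonde convolution $\sum_j\binom{n}{j}\binom{m-1-n}{n-j}=\binom{m-1}{n}$ for a large prime $m$ and converts it via \thmref{thm main1} and \thmref{thm Leibniz2} into the stated congruence, whereas you start from the stated sum and convert it back to Vandermonde; the sign bookkeeping, the $p$-independence step, and the derivation of the second identity from the first via the cancellation identity \eqnref{eqn product1} all match the paper.
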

\begin{proof}
By Vandermonde identity, for any $m > n$ we have
$\binom{m-1}{n}=\sum_{j=0}^n \binom{n}{j}\binom{m-1-n}{n-j}$. When $m$ is a prime, we have $\binom{m-1}{n} \equiv (-1)^n \, \, \, \mod \, \, m$. On the other hand, we use \thmref{thm main1} to derive $(-1)^{m+j}\binom{m-1-n}{n-j} \equiv (-1)^{n+1}\binom{2n-j}{n} \, \, \, \mod \, \, m$ if $m$ is a prime. Then we obtain
\begin{equation}\label{eqn Vid1}
\begin{split}
(-1)^n \equiv \sum_{j=0}^n (-1)^{m+n+j+1}\binom{n}{j}\binom{2n-j}{n}  \, \, \, \mod \, \, m.
\end{split}
\end{equation}
We can rewrite the congruence (\ref{eqn Vid1}) as follows:
\begin{equation}\label{eqn Vid2}
\begin{split}
1 \equiv \sum_{j=0}^n (-1)^{j}\binom{n}{j}\binom{2n-j}{n} \, \, \, \mod \, \, m.
\end{split}
\end{equation}
Note that the congruence (\ref{eqn Vid2}) is independent of $m$. By choosing $m$ large enough, we see that the congruence (\ref{eqn Vid2}) is in fact an equality. This gives the first equality in the theorem.

The second equality in the theorem follows from the first one if we use the cancellation identity 
$\binom{2n-j}{n}\binom{n}{j} = \binom{2n-j}{j}\binom{2n-2j}{n-j}$.
\end{proof}

\begin{theorem}\label{thm Vandermonde like2}
Let $s$ and $m$ be any positive integers. Then we have
\begin{equation*}\label{}
\begin{split}
\sum_{j=0}^s (-1)^j\binom{m+s+j}{s}\binom{s}{j}=(-1)^s, \quad \sum_{j=0}^s (-1)^j\binom{m+s+j}{j}\binom{m+s}{s-j}=(-1)^s.
\end{split}
\end{equation*}
\end{theorem}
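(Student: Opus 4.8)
The plan is to mirror the proof of \thmref{thm Vandermonde like1} as closely as possible, replacing \thmref{thm main1} with \thmref{thm main2} at the crucial step, since the structure of the two target identities (a Vandermonde convolution followed by a cancellation-identity rewrite) is completely parallel. First I would fix a prime $p$ that is larger than all the relevant quantities (in particular $p > m+s+j$ for every $j$ in the range $0 \leq j \leq s$, and $p > 2s$), and I would apply the Vandermonde identity in the form $\binom{p-1}{s} = \sum_{j=0}^s \binom{s}{j}\binom{p-1-s}{s-j}$. By \thmref{thm Leibniz2} the left-hand side is congruent to $(-1)^s \mod p$.

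The heart of the argument is to transform each factor $\binom{p-1-s}{s-j}$ into something independent of $p$ using \thmref{thm main2}. I expect to apply the theorem to rewrite $\binom{p-1-s}{s-j}$ (or its symmetric partner $\binom{p-1-s}{p-1-s-(s-j)}$) so that it becomes, up to a sign power, the binomial $\binom{m+s+j}{s}$ appearing in the claimed identity. Concretely, since \thmref{thm main2} states $\binom{k}{s'} \equiv (-1)^{s'}\binom{n-1-k+s'}{s'} \mod n$, I would read it in reverse, matching $n-1-k+s'$ against $p-1-s$ and the top entry $m+s+j$ against $k$; solving these matching conditions for $k$ and $s'$ should pin down exactly how $\binom{p-1-s}{s-j}$ corresponds to $(-1)^{\text{(something)}}\binom{m+s+j}{s}$, with the extra parameter $m$ entering precisely because $\binom{k}{s'}$ has a free top index once the base-$p$ offset is absorbed. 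Substituting this back and collecting the sign powers should collapse the congruence to $(-1)^s \equiv \sum_{j=0}^s (-1)^j \binom{m+s+j}{s}\binom{s}{j} \mod p$, and since every term is now independent of $p$, letting $p \to \infty$ upgrades the congruence to the first claimed equality, exactly as in \thmref{thm Vandermonde like1}.

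For the second equality I would invoke the cancellation identity \eqnref{eqn product1} in the form $\binom{m+s+j}{s}\binom{s}{j} = \binom{m+s+j}{j}\binom{m+s}{s-j}$, which rewrites each summand of the first identity into the summand of the second, so the second identity follows immediately from the first with no further work.

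The main obstacle I anticipate is the bookkeeping in the matching step: getting the indices of \thmref{thm main2} to align so that a free parameter $m$ survives (rather than being forced to a specific value), and then tracking the sign powers $(-1)^{s'}$, $(-1)^j$, and the $(-1)^{p-1}$-type contributions from \thmref{thm Leibniz2} carefully enough that they cancel to leave a clean $(-1)^j$ inside the sum and $(-1)^s$ outside. One subtlety is that the validity of applying \thmref{thm main2} requires the hypothesis $0 \leq s' \leq k < p$, so I must confirm that the chosen prime $p$ is large enough for the matched indices to lie in the admissible range for every $j$; this is exactly what the choice $p > m+s+j$ guarantees. Once the index identification is correct, the remainder is the same ``congruence-independent-of-$p$ becomes an identity'' mechanism already established, so I do not expect any genuinely new difficulty beyond this alignment.
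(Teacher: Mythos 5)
There is a genuine gap at the starting point. The Vandermonde convolution you chose, $\binom{p-1}{s}=\sum_{j=0}^{s}\binom{s}{j}\binom{p-1-s}{s-j}$, contains no trace of the parameter $m$, and the semi-symmetry congruences cannot create one: \thmref{thm main1} and \thmref{thm main2} each send a \emph{specific} binomial coefficient to another specific one, so once $p$, $s$, $j$ are fixed, $\binom{p-1-s}{s-j}$ has a unique image and there is no ``free top index'' left over to absorb $m$. Carrying out the matching you describe forces $s'=s$ and $k=2s-j$, giving $\binom{p-1-s}{s-j}\equiv(-1)^{s+j}\binom{2s-j}{s}\pmod p$; your argument then yields $\sum_{j=0}^{s}(-1)^{j}\binom{s}{j}\binom{2s-j}{s}=1$, which is just \thmref{thm Vandermonde like1} with $n=s$ again, not the claimed identity. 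The matching conditions you wrote down ($k=m+s+j$ together with the bottom row matching $p-1-s$) are consistent only if the lower index of the binomial being transformed is $m+j$, i.e.\ only if you start from $\binom{p-1-s}{m+j}$ rather than $\binom{p-1-s}{s-j}$.

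The fix is to start from a convolution that already carries $m$: the paper uses $\sum_{j=0}^{s}\binom{l}{m+j}\binom{s}{j}=\binom{l+s}{l-m}$ with $l=p-1-s$ (equivalently, Vandermonde for $\binom{p-1}{s+m}$ rather than for $\binom{p-1}{s}$). Then \thmref{thm Leibniz2} gives $\binom{p-1}{p-1-s-m}\equiv(-1)^{s+m}\pmod p$, and \thmref{thm main1} (not \thmref{thm main2}, as you had planned) gives $\binom{p-1-s}{m+j}\equiv(-1)^{m+j}\binom{m+s+j}{s}\pmod p$, after which the signs collapse to $\sum_{j=0}^{s}(-1)^{j}\binom{m+s+j}{s}\binom{s}{j}\equiv(-1)^{s}\pmod p$ and the ``congruence independent of $p$ becomes an identity'' mechanism finishes the proof. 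Your treatment of the second equality via the cancellation identity is correct and matches the paper.
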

\begin{proof}
We use the following equality \cite[pg 169]{GKP} (consider first equality at pg. 169 and set $n=0$) for any integers $m$, $s$ and $l$:
\begin{equation*}\label{}
\begin{split}
\sum_{j=0}^s \binom{l}{m+j}\binom{s}{j}=\binom{l+s}{l-m}.
\end{split}
\end{equation*}
Next, for a prime $p$, we take $l=p-1-s$ and rewrite this equation to obtain:
\begin{equation}\label{eqn V2}
\begin{split}
\sum_{j=0}^s \binom{p-1-s}{m+j}\binom{s}{j}=\binom{p-1}{p-1-s-m}.
\end{split}
\end{equation}
On the other hand, we have $\binom{p-1}{p-1-s-m} \equiv (-1)^{p-1-s-m}=(-1)^{s+m} \, \, \, \mod \, \, p$ by \thmref{thm Leibniz2}  and $\binom{p-1-s}{m+j}=\binom{p-1-s}{m+j+s-s} \equiv (-1)^{m+j+2s}\binom{m+s+j}{s} \, \, \, \mod \, \, p$ by \thmref{thm main1}. We use these in \eqref{eqn V2} to obtain:
\begin{equation}\label{eqn V3}
\begin{split}
\sum_{j=0}^s (-1)^j \binom{m+s+j}{s}\binom{s}{j} \equiv (-1)^s \, \, \, \mod \, \, p.
\end{split}
\end{equation}
Note that the congruence (\ref{eqn V3}) is independent of $p$. By choosing $p$ large enough, we see that the congruence (\ref{eqn V3}) is in fact an equality. This gives the first equality in the theorem.

The second equality in the theorem follows from the first one if we use the cancellation identity
$\binom{m+s+j}{s}\binom{s}{j} = \binom{m+s+j}{j}\binom{m+s}{s-j}$.
\end{proof}
Note that the identity given in \thmref{thm Vandermonde like2} is closely related to the identity with number $3.47$ in \cite{G} (consider the case $r=n$ in $3.47$).

\begin{theorem}\label{thm ratio1}
Let $s$ and $m$ be any positive integers. Then we have
\begin{equation*}\label{}
\begin{split}
\sum_{j=0}^m (-1)^j\frac{\binom{m}{j}}{\binom{s+j}{s}}=\frac{s}{s+m}.
\end{split}
\end{equation*}
\end{theorem}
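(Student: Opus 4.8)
The plan is to prove \thmref{thm ratio1} by the same mechanism used for \thmref{thm Vandermonde like1} and \thmref{thm Vandermonde like2}: begin from a known closed-form evaluation, specialize one of its parameters to a large prime $p$, and use the semi-symmetry of \thmref{thm main1} to convert the summand into the one appearing in the theorem. The known identity I would take as the (sign-free) dual is
\[
\sum_{j=0}^m \frac{\binom{m}{j}}{\binom{N}{j}} = \frac{N+1}{N+1-m}, \qquad N \ge m .
\]
The point is that applying the semi-symmetry map to the denominators $\binom{N}{j}$ should simultaneously turn them into $\binom{s+j}{s}$ and supply the alternating signs $(-1)^j$ that are missing on the left.

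Concretely, I would fix a large prime $p$ and set $N = p-1-s$. Writing \thmref{thm main1} in the form $\binom{s+j}{s}\equiv(-1)^{j}\binom{p-1-s}{j}$, one obtains $\binom{p-1-s}{j}\equiv(-1)^{j}\binom{s+j}{s}\pmod p$; since $p$ is large, each $\binom{s+j}{s}$ is invertible modulo $p$, so $1/\binom{p-1-s}{j}\equiv(-1)^{j}/\binom{s+j}{s}$. Substituting into the displayed identity collapses the two sign factors and gives
\[
\sum_{j=0}^m (-1)^{j}\frac{\binom{m}{j}}{\binom{s+j}{s}} \;\equiv\; \frac{p-s}{\,p-s-m\,} \;\equiv\; \frac{s}{s+m}\pmod p,
\]
where the last reduction uses $p-s\equiv -s$, $p-s-m\equiv -(s+m)$, and the invertibility of $s+m$ modulo $p$.

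The final step is to upgrade this congruence to the asserted equality. Both the left-hand side and $s/(s+m)$ are fixed rationals independent of $p$, and the congruence holds for every sufficiently large prime $p$; a fixed rational congruent to another fixed rational modulo infinitely many primes (all avoiding the denominators) must equal it, so the identity follows exactly as the phrase ``choosing $p$ large enough'' did in the previous theorems. The one place needing more care than in the earlier (integer) arguments is precisely this passage, since we are now manipulating reciprocals: the main obstacle I anticipate is the bookkeeping of invertibility, namely verifying that $p\nmid\binom{s+j}{s}$ and $p\nmid(s+m)$ so that every fraction appearing in the congruence is legitimate, which becomes automatic once $p$ exceeds $s+m$. (As a sanity check one can bypass primes entirely by rewriting $\binom{m}{j}/\binom{s+j}{s}=\binom{s+m}{m-j}/\binom{s+m}{m}$ through \eqnref{eqn product1} and summing the resulting partial alternating binomial series, but the congruence route is the one that illustrates the method of this section.)
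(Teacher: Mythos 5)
Your proposal is correct and follows essentially the same route as the paper: both start from the Gould/GKP identity $\sum_{j=0}^m \binom{m}{j}/\binom{n}{j}=(n+1)/(n+1-m)$, specialize $n=p-1-s$ for a large prime $p$, apply \thmref{thm main1} to replace $\binom{p-1-s}{j}$ by $(-1)^j\binom{s+j}{s}$, and then upgrade the resulting $p$-independent congruence to an equality. Your extra attention to the invertibility of the denominators modulo $p$ is a point the paper leaves implicit, but it does not change the argument.
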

\begin{proof}
We use the following equality \cite[pg 173]{GKP} for any integers $n \geq m \geq 0$:
\begin{equation*}\label{}
\begin{split}
\sum_{j=0}^m \frac{\binom{m}{j}}{\binom{n}{j}}=\frac{n+1}{n+1-m}.
\end{split}
\end{equation*}
Given positive integers $m$ and $s$, take a prime $p$ such that $p-1-s \geq m$. Next, we rewrite the equation above for $m$ and $n=p-1-s$:
\begin{equation}\label{eqn r1}
\begin{split}
\sum_{j=0}^m \frac{\binom{m}{j}}{\binom{p-1-s}{j}}=\frac{p-s}{p-s-m}.
\end{split}
\end{equation}
On the other hand, we have $\binom{p-1-s}{j}=\binom{p-1-s}{j+s-s} \equiv (-1)^{j+2s}\binom{s+j}{s} \, \, \, \mod \, \, p$ by \thmref{thm main1}. We use these in \eqref{eqn r1} to obtain:
\begin{equation}\label{eqn r2}
\begin{split}
\sum_{j=0}^m (-1)^j\frac{\binom{m}{j}}{\binom{s+j}{s}} \equiv \frac{s}{s+m} \, \, \, \mod \, \, p.
\end{split}
\end{equation}
Note that the congruence (\ref{eqn r2}) is independent of $p$. By choosing $p$ large enough, we see that the congruence (\ref{eqn r2}) is in fact an equality. This gives what we want.
\end{proof}
Note that the identity given in \thmref{thm ratio1} was listed as the identity with number $1.42$ and $4.30$ in \cite{G}.

\begin{theorem}\label{thm productsum1}
Let $s$ be any positive integer. Then we have
\begin{equation*}\label{}
\begin{split}
\sum_{j=0}^s (-1)^j j \binom{s+j}{s}\binom{s}{j}=(-1)^s (s+1) s .
\end{split}
\end{equation*}
\end{theorem}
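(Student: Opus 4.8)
The plan is to recognize the stated identity as the semi-symmetric image of a weighted Vandermonde convolution, exactly in the spirit of \thmref{thm Vandermonde like1} and \thmref{thm ratio1}. First I would establish the auxiliary polynomial identity
$$\sum_{j=0}^s j\binom{N}{j}\binom{s}{j}=s\binom{N+s-1}{s},$$
valid for every nonnegative integer $N$. This follows by absorbing the factor $j$ through $j\binom{s}{j}=s\binom{s-1}{j-1}$, shifting the index via $j=i+1$, rewriting $\binom{s-1}{i}=\binom{s-1}{s-1-i}$, and then applying the ordinary Vandermonde convolution $\sum_i\binom{N}{i+1}\binom{s-1}{s-1-i}=\binom{N+s-1}{s}$.

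Next I would specialize $N=p-1-s$ for a prime $p$ chosen large enough that $p>2s$, so that every binomial coefficient appearing below lies in the admissible range $0\le s\le k<p$ of \thmref{thm main1} and so that $s\le p-2$. With this choice the auxiliary identity reads $\sum_{j=0}^s j\binom{p-1-s}{j}\binom{s}{j}=s\binom{p-2}{s}$, since $N+s-1=p-2$. On the left side I would invoke \thmref{thm main1} with $k=s+j$, giving $\binom{s+j}{s}\equiv(-1)^{2s+j}\binom{p-1-s}{j}=(-1)^j\binom{p-1-s}{j}$, equivalently $\binom{p-1-s}{j}\equiv(-1)^j\binom{s+j}{s}\pmod p$. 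On the right side I would apply \corref{cor main1} with $i=2$ and $k=s$, which yields $\binom{p-2}{s}\equiv(-1)^s(s+1)\pmod p$. Substituting both into the specialized identity converts it into
$$\sum_{j=0}^s(-1)^j j\binom{s+j}{s}\binom{s}{j}\equiv(-1)^s(s+1)s\pmod p.$$
Since both sides are fixed integers independent of $p$ and the congruence holds for all sufficiently large primes $p$, the two sides must be equal, which is precisely the claim.

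The main difficulty here is bookkeeping rather than any deep computation: I must verify the auxiliary weighted Vandermonde identity carefully (the index shift together with the correct choice of which factor to reflect), and keep the parities of all the signs consistent across the application of \thmref{thm main1} to the summand and the application of \corref{cor main1} to the closed form, since an off-by-one in any exponent of $-1$ would flip the final sign. I would also confirm that the standing hypothesis $p>2s$ genuinely guarantees $s+j<p$ for all $0\le j\le s$ as well as $s\le p-2$, so that \thmref{thm main1} and \corref{cor main1} apply verbatim.
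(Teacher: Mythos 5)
Your proposal is correct and follows essentially the same route as the paper: both start from the weighted identity $\sum_j j\binom{N}{j}\binom{s}{j}=s\binom{N+s-1}{s}$ (which the paper simply cites from the literature while you rederive it from Vandermonde), specialize $N=p-1-s$, apply \thmref{thm main1} to the summand and evaluate $\binom{p-2}{s}\equiv(-1)^s(s+1)$ (the paper does this directly, you via \corref{cor main1}, which amounts to the same computation), and conclude by letting $p$ be large. The extra care you take with the hypotheses $p>2s$ and $s\le p-2$ is a welcome tightening of the paper's looser condition $p-1-s\ge 0$.
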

\begin{proof}
We use the following equality \cite[pg 181]{GKP} for any integers $s$ and $n\geq 0$:
\begin{equation*}\label{}
\begin{split}
\sum_{j=0} j \binom{n}{j}\binom{s}{j}=s\binom{n+s-1}{n-1}.
\end{split}
\end{equation*}
Given a positive integer $s$, take an odd prime $p$ such that $p-1-s \geq 0$. Next, we rewrite the equation above for $s$ and $n=p-1-s$:
\begin{equation}\label{eqn s1}
\begin{split}
\sum_{j=0}^s j \binom{p-1-s}{j} \binom{s}{j}=s\binom{p-2}{s}.
\end{split}
\end{equation}
On the other hand, we have $\binom{p-2}{s} \equiv (-1)^s\binom{s+1}{1} \, \, \, \mod \, \, p$ and that $\binom{p-1-s}{j}=\binom{p-1-s}{j+s-s} \equiv (-1)^{j+2s}\binom{s+j}{s} \, \, \, \mod \, \, p$ by \thmref{thm main1}. We use these in \eqref{eqn s1} to obtain:
\begin{equation}\label{eqn s2}
\begin{split}
\sum_{j=0}^s (-1)^j j \binom{s+j}{j} \binom{s}{j} \equiv (-1)^s (s+1)s \, \, \, \mod \, \, p.
\end{split}
\end{equation}
Note that the congruence (\ref{eqn s2}) is independent of $p$. By choosing $p$ large enough, we see that the congruence (\ref{eqn s2}) is in fact an equality. This gives what we want.
\end{proof}

\begin{theorem}\label{thm sum new1}
Let $s$ and $n$ be positive integers. Then we have
\begin{equation*}\label{}
\begin{split}
\sum_{j=1}^n (-1)^j\binom{s-1}{j-1}\frac{1}{j}=(-1)^n\frac{1}{s} \binom{s-1}{n}-\frac{1}{s}.
\end{split}
\end{equation*}
\end{theorem}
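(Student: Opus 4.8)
The plan is to stay within the dualization template of this section. First I would look in Gould's tables \cite{G} or in \cite{GKP} for a clean summation identity that, after specializing a free parameter to $p-1-s$ for a prime $p$ and applying \thmref{thm main1}, reproduces the left-hand side; one would then reduce the resulting right-hand side modulo $p$, observe that the congruence no longer involves $p$, and let $p$ grow without bound to upgrade it to an equality, exactly as in the proof of \thmref{thm ratio1}.

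The observation that makes the target tractable is the absorption identity $\frac{1}{j}\binom{s-1}{j-1}=\frac{1}{s}\binom{s}{j}$, valid for every $j\geq 1$ since $s\geq 1$. Using it, the left-hand side collapses to $\frac{1}{s}\sum_{j=1}^{n}(-1)^{j}\binom{s}{j}$, so that the asserted identity is equivalent to the partial alternating sum $\sum_{j=0}^{n}(-1)^{j}\binom{s}{j}=(-1)^{n}\binom{s-1}{n}$, obtained after restoring the $j=0$ term $\binom{s}{0}=1$ and dividing by $s$.

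This last identity is standard: writing $\binom{s}{j}=\binom{s-1}{j}+\binom{s-1}{j-1}$ causes the alternating sum to telescope to $(-1)^{n}\binom{s-1}{n}$. Alternatively, a direct induction on $n$ proves the theorem in one stroke, the inductive step reducing to the recurrence $\binom{s-1}{n+1}=\binom{s-1}{n}\frac{s-1-n}{n+1}$ and a short cancellation. Either route is routine, and I would present the absorption reduction, since it exhibits the result as a mild repackaging of a familiar identity, in keeping with the duality philosophy announced at the start of this section.

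The hard part will be locating the \emph{exact} known source identity if one insists on a pure dualization proof in the style of the preceding theorems: the factor $1/j$ together with the binomial $\binom{s-1}{j-1}$ does not line up with any of the \cite{GKP} identities used above as directly as it did in \thmref{thm ratio1}. I therefore expect the cleanest path to be the absorption reduction, optionally recasting the partial alternating sum itself through \thmref{thm main1} so that the argument remains inside the method advertised in \eqref{eqn congruences}.
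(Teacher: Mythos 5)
Your proof is correct, but it takes a genuinely different route from the paper. The paper proves the case $s>n$ by its advertised dualization method: it starts from Gould's identity $\sum_{j=1}^n \binom{j+r-1}{r}\frac{1}{j}=\frac{1}{r}\binom{n+r}{r}-\frac{1}{r}$, substitutes $r=p-s$ for a prime $p>s$, converts both sides using \thmref{thm main2}, and lets $p\to\infty$; it then handles $s\le n$ separately, and in that degenerate case it uses exactly your absorption identity $\binom{s}{j}=\frac{s}{j}\binom{s-1}{j-1}$ to reduce to $\sum_{j=1}^{s}(-1)^j\binom{s}{j}=-1$. Your argument applies the absorption step globally: the claim becomes $\sum_{j=0}^{n}(-1)^j\binom{s}{j}=(-1)^n\binom{s-1}{n}$, which telescopes by Pascal's rule, and this single computation covers both ranges of $s$ uniformly (when $n\ge s$ both sides vanish). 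What your route buys is brevity, no case split, and no appeal to an external table identity; what it gives up is precisely the point of \secref{sec identities}, namely exhibiting the identity as a dual, via \thmref{thm main2}, of a known summation --- a trade-off you correctly flag yourself. As a standalone proof of the stated theorem, yours is complete and arguably cleaner.
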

\begin{proof}
We use the identity \cite[Identity 8.13 at pg 48]{G2}
$$\sum_{j=1}^n \binom{j+r-1}{r}\frac{1}{j}=\frac{1}{r} \binom{n+r}{r}-\frac{1}{r}. $$
For any given positive integers $n$ and $s$ with $s>n$, take $r=p-s$ for some prime $p > s$. Then we notice that $\binom{s-1}{j-1} \equiv (-1)^{j-1} \binom{p-1-s+j}{j-1}\, \, \, \mod \, \, p $ by \thmref{thm main2}. Therefore, 
\begin{equation}\label{eqn sn1}
\begin{split}
\sum_{j=1}^n (-1)^j \binom{s-1}{j-1} \frac{s}{j} & \equiv \sum_{j=1}^n \binom{p-1-s+j}{j-1} \frac{p-s}{j} \, \, \, \mod \, \, p \\
&=\sum_{j=1}^n \binom{p-1-s+j}{p-s} \frac{p-s}{j} \\
&\equiv   \binom{n+p-s}{p-s}-1 \, \, \, \mod \, \, p.
\end{split}
\end{equation}
For the right hand side, $\binom{n+p-s}{p-s}=\binom{p-1-(s-1)+n}{n} \equiv (-1)^n \binom{s-1}{n}\, \, \, \mod \, \, p$ by \thmref{thm main2} when $s-1 \geq n$. Thus, we can rewrite  (\ref{eqn sn1}) as follows:
\begin{equation}\label{eqn sn2}
\begin{split}
\sum_{j=1}^n (-1)^j \binom{s-1}{j-1} \frac{s}{j} \equiv (-1)^n \binom{s-1}{n}-1 \, \, \, \mod \, \, p.
\end{split}
\end{equation}
As the congruence (\ref{eqn sn2}) is independent of $p$, it must be an equality. Hence, the theorem is proved when $s > n$. 

If $s \leq n$, we only need to show that $\sum_{j=1}^s (-1)^j\binom{s-1}{j-1}\frac{s}{j}=-1$. Using $\binom{s}{j}=\frac{s}{j}\binom{s-1}{j-1}$ when $j \neq 0$, we need $\sum_{j=1}^s (-1)^j\binom{s}{j}=-1$ which clearly holds.
\end{proof}

\section{Obtaining congruences}\label{sec congruences}
In this section, we give examples of obtaining congruences in modulo a prime by applying \thmref{thm main1} and \thmref{thm main2}. This is interesting, because it shows the connections between seemingly unrelated congruences. 

\begin{theorem}\label{thm sum cong1}
Let $n$ be a prime. For any nonnegative integer $s < n-1$ we have
\begin{equation*}\label{}
\begin{split}
\sum_{k=1}^{n-1-s} \frac{1}{k}\binom{k+s}{s} \equiv -\sum_{k=1}^{n-1-s} \frac{1}{k} \, \, \, mod \, \, n.
\end{split}
\end{equation*}
In particular, when $s=0$ we have, $\sum_{k=1}^{n-1} \frac{1}{k} \equiv 0 \, \, \, mod \, \, n$ if $n>2$.
\end{theorem}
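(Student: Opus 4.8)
The plan is to follow the template used throughout this section: begin from a known rational identity, rewrite its binomial coefficients using the semi-symmetry property of \thmref{thm main1}, and then reduce modulo the prime $n$. The identity I would invoke is the classical alternating harmonic--binomial identity
\[ \sum_{k=1}^{m} \frac{(-1)^{k-1}}{k}\binom{m}{k} = \sum_{k=1}^{m} \frac{1}{k}, \]
which holds for every positive integer $m$ (for instance, it is the evaluation of $\int_0^1 \frac{1-(1-x)^m}{x}\,dx$). I will apply it with $m = n-1-s$, which is a positive integer since $s < n-1$.

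First I would record that, because $n$ is prime and every summation index satisfies $1 \le k \le m \le n-1$, each $k$ is invertible modulo $n$; hence every $\frac{1}{k}$ below is to be read as the inverse of $k$ modulo $n$, and the displayed rational identity may be reduced termwise modulo $n$. Next I would apply \thmref{thm main1} with its parameter ``$k$'' taken to be $k+s$: since $0 \le s \le k+s \le n-1 < n$, the hypotheses hold, and the theorem yields
\[ \binom{k+s}{s} \equiv (-1)^{(k+s)+s}\binom{n-1-s}{k} = (-1)^{k}\binom{n-1-s}{k} \quad \pmod n, \]
equivalently $\binom{n-1-s}{k} \equiv (-1)^{k}\binom{k+s}{s} \pmod n$.

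Substituting this relation into the harmonic--binomial identity taken with $m = n-1-s$, the sign combines as $(-1)^{k-1}(-1)^{k} = -1$, so the left-hand side collapses to $-\sum_{k=1}^{n-1-s}\frac{1}{k}\binom{k+s}{s}$ while the right-hand side remains $\sum_{k=1}^{n-1-s}\frac{1}{k}$; rearranging gives exactly the asserted congruence. For the special case $s=0$, I would simply set $s=0$ in the result: since $\binom{k}{0}=1$, the congruence becomes $\sum_{k=1}^{n-1}\frac{1}{k} \equiv -\sum_{k=1}^{n-1}\frac{1}{k}$, i.e. $2\sum_{k=1}^{n-1}\frac{1}{k}\equiv 0 \pmod n$, and since $n>2$ makes $2$ invertible, $\sum_{k=1}^{n-1}\frac{1}{k}\equiv 0 \pmod n$.

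I expect essentially no serious obstacle: the single application of \thmref{thm main1} carries all the content, and the remaining work is sign bookkeeping together with the justification that the classical identity, being an identity of rational numbers whose denominators are prime to $n$, reduces faithfully modulo $n$. The only point demanding a little care is confirming that all indices stay in the invertible range $1 \le k \le n-1$, which the hypothesis $s < n-1$ guarantees.
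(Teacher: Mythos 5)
Your proposal is correct and follows essentially the same route as the paper: both start from the identity $\sum_{k=1}^{m}(-1)^{k-1}\frac{1}{k}\binom{m}{k}=\sum_{k=1}^{m}\frac{1}{k}$ with $m=n-1-s$ and apply Theorem~\ref{thm main1} to convert $\binom{n-1-s}{k}$ into $(-1)^k\binom{k+s}{s}$, then deduce the $s=0$ case by dividing by $2$. Your write-up actually supplies the sign bookkeeping and invertibility checks that the paper leaves implicit.
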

\begin{proof}
We have the following equality (see \cite[identity 1.45]{G}), for any integer $m$:
\begin{equation*}\label{}
\begin{split}
\sum_{k=1}^{m} (-1)^{k-1}\frac{1}{k}\binom{m}{k} = \sum_{k=1}^{m} \frac{1}{k}.
\end{split}
\end{equation*}
We take $m=n-1-s$ and applying \thmref{thm main1}, we derive the main congruence. The second congruence follows from the first one if we set $s=0$. Note that the second congruence was obtained in modulo $p^2$ by J. Wolstenholme in 1862.
\end{proof}
Next, we have another example.
\begin{theorem}\label{thm productsum cong1}
Let $p$ be an odd prime and let $r$ be an integer such that $0 \leq r \leq \frac{p-1}{2}$. Then we have
\begin{equation*}\label{}
\begin{split}
\sum_{j=r}^{\frac{p-1}{2}} \binom{2j}{j}\binom{j}{r} 2^{p-1-2k} \equiv 
\begin{cases} 0 \, \, \, mod \, \, p, & \text{if $r < \frac{p-1}{2}$}\\
(-1)^{\frac{p-1}{2}}\, \, \, mod \, \, p, & \text{if $r = \frac{p-1}{2}$}.
\end{cases}
\end{split}
\end{equation*}
\end{theorem}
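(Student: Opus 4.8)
The plan is to reduce the entire statement to a single closed-form evaluation of the finite sum, reading the summand exponent as $2^{p-1-2j}=2^{2m-2j}$ with $m=\frac{p-1}{2}$, so that $2m+1=p$ is exactly the modulus. Set
\[
f(m,r)=\sum_{j=r}^{m}\binom{2j}{j}\binom{j}{r}2^{2m-2j}.
\]
The consecutive summands have rational term ratio $\frac{2j+1}{2(j+1-r)}$, so the sum is Gosper-summable, and a short search for an antidifference suggests the identity
\[
f(m,r)=\frac{2m+1}{2r+1}\binom{2m}{m}\binom{m}{r}.
\]
This is the engine of the whole argument: the factor $2m+1=p$ appearing in the numerator is precisely what forces the vanishing.

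Next I would prove this identity by induction on $m$ with $r$ fixed. Peeling off the top term gives the recurrence $f(m,r)=\binom{2m}{m}\binom{m}{r}+4\,f(m-1,r)$, with base case $f(r,r)=\binom{2r}{r}$. Substituting the conjectured closed form reduces the inductive step to the single algebraic identity $2(m-r)\binom{2m}{m}\binom{m}{r}=4(2m-1)\binom{2m-2}{m-1}\binom{m-1}{r}$, which follows immediately from $\binom{2m}{m}=\frac{2(2m-1)}{m}\binom{2m-2}{m-1}$ together with $(m-r)\binom{m}{r}=m\binom{m-1}{r}$. None of this is delicate; it is pure routine once the closed form is in hand.

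Finally I would specialize $m=\frac{p-1}{2}$, giving
\[
\sum_{j=r}^{(p-1)/2}\binom{2j}{j}\binom{j}{r}2^{p-1-2j}=\frac{p}{2r+1}\binom{p-1}{(p-1)/2}\binom{(p-1)/2}{r}.
\]
If $r<\frac{p-1}{2}$ then $1\le 2r+1<p$, so $2r+1$ is prime to $p$; since the sum is an integer, multiplying it by $2r+1$ yields $p\binom{p-1}{(p-1)/2}\binom{(p-1)/2}{r}\equiv 0\pmod p$, whence the sum itself is $\equiv 0$. If $r=\frac{p-1}{2}$ the sum collapses to the single term $\binom{p-1}{(p-1)/2}$, which is $\equiv(-1)^{(p-1)/2}\pmod p$ by \thmref{thm Leibniz2}. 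The main obstacle is thus the very first step—discovering and verifying the closed form $f(m,r)$; everything after it is forced. An alternative more in the spirit of this section is to first invoke \thmref{thm main1} to replace $\binom{2j}{j}\equiv(-1)^j\binom{p-1-j}{j}$, which recasts the sum as a Chebyshev-type evaluation (for $r=0$ it becomes $U_{p-1}(1)=p$); but carrying the extra factor $\binom{j}{r}$ through that route is messier than the direct identity above.
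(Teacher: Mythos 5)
Your proof is correct, but it takes a genuinely different route from the paper. The paper follows the section's advertised method: it starts from the tabulated identity $\sum_{j=0}^{\lfloor n/2\rfloor}(-1)^j\binom{n-j}{j}\binom{j}{r}2^{n-2j}=(-1)^r\binom{n+1}{2r+1}$ (Gould), sets $n=p-1$, and applies the semi-symmetry congruence $\binom{p-1-j}{j}\equiv(-1)^j\binom{2j}{j}\pmod p$ of \thmref{thm main1} to transform the left side, while the right side $(-1)^r\binom{p}{2r+1}$ visibly vanishes mod $p$ unless $2r+1=p$. You instead establish the exact integer identity
\[
\sum_{j=r}^{m}\binom{2j}{j}\binom{j}{r}4^{m-j}=\frac{2m+1}{2r+1}\binom{2m}{m}\binom{m}{r},
\]
by a routine induction on $m$ (the recurrence $f(m,r)=\binom{2m}{m}\binom{m}{r}+4f(m-1,r)$ and the two factorial identities you cite all check out, as does the closed form on small cases), and then read off the congruence from the explicit factor $2m+1=p$ in the numerator, using that $2r+1$ is a unit mod $p$ when $r<\frac{p-1}{2}$. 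Your argument is self-contained and actually proves more — an exact evaluation over $\ZZ$ rather than only a residue — whereas the paper's proof is shorter given Gould's identity as a black box and, more to the point, is designed to showcase \thmref{thm main1} as the engine; your version bypasses that theorem entirely (you note the alternative only in passing). Both readings silently correct the same typo, $2^{p-1-2k}$ for $2^{p-1-2j}$, so there is no discrepancy there.
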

\begin{proof}
We use the following equality \cite[pg. 45]{G} for any integers $r$ and $n$:
\begin{equation*}\label{}
\begin{split}
\sum_{j=0}^{\lfloor \frac{n}{2}\rfloor} (-1)^j \binom{n-j}{j}\binom{j}{r}2^{n-2k}=(-1)^r\binom{n+1}{2r+1}.
\end{split}
\end{equation*}
Taking $n=p-1$ for a odd prime $p$ in this equality gives
\begin{equation}\label{eqn sum cong1}
\begin{split}
\sum_{j=0}^{\frac{p-1}{2}} (-1)^j \binom{p-1-j}{j}\binom{j}{r}2^{p-1-2k}=(-1)^r\binom{p}{2r+1}.
\end{split}
\end{equation} 
Since we have $\binom{p-1-j}{j} \equiv (-1)^j\binom{2j}{j} \, \, \, \mod \, \, p$ by \thmref{thm main1}, the result follows from \eqref{eqn sum cong1} by noting that $\binom{p}{2r+1} \equiv 0 \, \, \, \mod \, \, p$ if $r < \frac{p-1}{2}$ and
$\binom{p}{2r+1} \equiv 1 \, \, \, \mod \, \, p$ if $r = \frac{p-1}{2}$.
\end{proof}
The following example shows that one can prove many congruences easily by applying \thmref{thm main1} and \thmref{thm main2}. 
\begin{theorem}\label{thm productsum cong2}
We have the following congruences in modulo an odd prime $p$:
\begin{equation*}\label{}
\begin{split}
F_{p} \equiv \sum_{0 \leq i, \, j \leq p-1} (-1)^{i+j}\binom{i+j}{i}^2 \equiv \sum_{j=0}^{\frac{p-1}{2}} (-1)^j \binom{2j}{j} \equiv 
\begin{cases} 1, & \text{if $p \equiv \pm 1 \, \, \, mod \, \, 5$}\\
-1, & \text{if $p \equiv \pm 2 \, \, \, mod \, \, 5$}.
\end{cases}
\end{split}
\end{equation*}
\begin{equation*}\label{}
\begin{split}
F_{p+1} \equiv \sum_{0 \leq i, \, j \leq p} (-1)^{i+j}\binom{i+j-1}{j}\binom{i+j}{i} \equiv \sum_{j=0}^{\frac{p-1}{2}} (-1)^j \binom{2j-1}{j} \equiv 
\begin{cases} 1, & \text{if $p \equiv \pm 1 \, \, \, mod \, \, 5$}\\
0, & \text{if $p \equiv \pm 2 \, \, \, mod \, \, 5$}.
\end{cases}
\end{split}
\end{equation*}
\end{theorem}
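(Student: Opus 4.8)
The plan is to read each displayed line as a chain of three congruences and to prove the three links separately: reduce the double sum to a single sum of central binomial coefficients modulo $p$, identify that single sum with a Fibonacci number by means of \thmref{thm main1}, and finally evaluate the Fibonacci number modulo $p$ from the classical Fibonacci congruences. I would carry out the $F_p$ line in full detail and run the $F_{p+1}$ line through the same three steps.

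For the first link in the $F_p$ line, I group the double sum along the anti-diagonals $n=i+j$. On each anti-diagonal the Chu--Vandermonde identity gives $\sum_{i+j=n}\binom{n}{i}^2=\binom{2n}{n}$, so the inner sum is a single central binomial coefficient. The crucial point is that whole anti-diagonals vanish modulo $p$: if $0\le i,j\le p-1$ and $i+j\ge p$, then writing $i+j=p+r$ with $0\le r\le p-2$ and applying \thmref{thm Lucas} gives $\binom{i+j}{i}\equiv\binom{r}{i}\equiv 0 \, \mod \, p$, since $i>r$. Hence only the terms with $i+j\le p-1$ survive and $\sum_{0\le i,j\le p-1}(-1)^{i+j}\binom{i+j}{i}^2\equiv\sum_{n=0}^{p-1}(-1)^n\binom{2n}{n} \, \mod \, p$. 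The same vanishing applied once more---now $\binom{2n}{n}\equiv 0 \, \mod \, p$ for $(p-1)/2<n\le p-1$, because $2n\ge p$---trims the range to $0\le n\le (p-1)/2$, which is the middle expression in the first line.

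For the second link I use the classical Fibonacci identity $F_{m+1}=\sum_{k\ge 0}\binom{m-k}{k}$. Taking $m=p-1$ gives $F_p=\sum_{k=0}^{(p-1)/2}\binom{p-1-k}{k}$, and \thmref{thm main1}---in the exact form $\binom{p-1-k}{k}\equiv(-1)^k\binom{2k}{k} \, \mod \, p$ already used in the proof of \thmref{thm productsum cong1}---turns this term by term into the single sum, so $F_p\equiv\sum_{n=0}^{(p-1)/2}(-1)^n\binom{2n}{n} \, \mod \, p$. Taking instead $m=p$ gives $F_{p+1}=\sum_{k=0}^{(p-1)/2}\binom{p-k}{k}$, and a second application of \thmref{thm main1} yields $\binom{p-k}{k}\equiv(-1)^k\binom{2k-1}{k} \, \mod \, p$ for $k\ge 1$, which is the single sum in the second line. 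For the third link I invoke the classical congruences $F_p\equiv\left(\tfrac 5p\right) \, \mod \, p$ and $L_p\equiv 1 \, \mod \, p$ for the Lucas number $L_p$; since $2F_{p+1}=F_p+L_p$, these give $F_p\equiv\left(\tfrac 5p\right)$ and $F_{p+1}\equiv\tfrac12\bigl(1+\left(\tfrac 5p\right)\bigr)$. Quadratic reciprocity evaluates $\left(\tfrac 5p\right)=\left(\tfrac p5\right)$ as $+1$ for $p\equiv\pm 1$ and $-1$ for $p\equiv\pm 2$ modulo $5$, which reproduces exactly the two displayed case splits.

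The hard part will be the first link for the second double sum. On the anti-diagonal, Chu--Vandermonde gives $\sum_{i+j=n}\binom{n-1}{j}\binom{n}{i}=\binom{2n-1}{n}$, but because the summation range here is $0\le i,j\le p$ rather than $0\le i,j\le p-1$, the clean vanishing used above breaks down: the terms with $i=p$ or $j=p$ (equivalently $p<i+j\le 2p$) are no longer killed outright by \thmref{thm Lucas}, so one must track these contributions and show that, in the alternating sum over $n$, they cancel up to the surviving $n=2p$ term, which then has to be reconciled against the trimming of the main range. The lone degenerate term $i=j=0$, where $\binom{i+j-1}{j}=\binom{-1}{0}$, additionally forces a choice of convention for $\binom{-1}{0}$; pinning this term down and verifying the boundary cancellation by hand is the one genuinely delicate place in the argument, the remainder being the routine Vandermonde--Lucas reduction together with the classical Fibonacci input.
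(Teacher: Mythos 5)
Your handling of the first display is correct and complete, and it takes a genuinely different route from the paper: the paper gets the double sum directly from the identity $F_{n+1}=\sum_{0\le i,j\le n}(-1)^i\binom{n-i}{j}\binom{i+j}{i}$ of \cite{KA} with $n=p-1$, applying \thmref{thm main1} term by term (legitimately, since on the anti-diagonals $i+j\ge p$ both $\binom{p-1-i}{j}$ and $\binom{i+j}{j}$ vanish mod $p$), whereas you prove the link ``double sum $\equiv$ single sum'' directly by anti-diagonal grouping, Chu--Vandermonde and Lucas-type vanishing, and then tie the single sum to $F_p$ by the shallow-diagonal identity. Both routes work for the $F_p$ chain, and your second and third links (including $2F_{p+1}=F_p+L_p$ and the evaluation of $\left(\tfrac{5}{p}\right)$) are fine.

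The second display is where your proposal has a genuine gap, and it is not one you can close: you defer exactly the computation that falsifies the claim. Carrying out the bookkeeping you describe, the anti-diagonals $n=p+r$ with $1\le r\le p-1$ do contribute $\sum_{r=1}^{p-1}(-1)^{p+r}\equiv 0$ in total (on each such diagonal only the term $i=r$, $j=p$ survives mod $p$), but the diagonal $n=p$ contributes $(-1)^p\binom{2p-1}{p}\equiv -1$ and the corner $i=j=p$ contributes $\binom{2p-1}{p}\binom{2p}{p}\equiv 2$, for a net surplus of $+1$. Hence, with the convention $\binom{-1}{0}=1$ that the single-sum link forces, one gets $\sum_{0\le i,j\le p}(-1)^{i+j}\binom{i+j-1}{j}\binom{i+j}{i}\equiv F_{p+1}+1\pmod p$, not $F_{p+1}$. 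A direct check at $p=3$ confirms this: the double sum equals $127\equiv 1\pmod 3$, while $F_4=3\equiv 0$ and the single sum is $\binom{-1}{0}-\binom{1}{1}=0$. No convention for the $(0,0)$ term rescues the whole chain, since the double-sum link would need $\binom{-1}{0}=0$ and the single-sum link needs $\binom{-1}{0}=1$. The defect is inherited from the paper's own argument, which applies \thmref{thm main2} to $\binom{p-i}{j}$ at $i=0$ (where the upper entry is $p$, outside the theorem's range $k<n$, so $\binom{p}{p}=1$ gets replaced by $0$) and at $j=p$ with $i\ge 1$ (where $\binom{p-i}{p}=0$ but $(-1)^p\binom{i+p-1}{p}\equiv -1$). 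For the second chain you should therefore prove only the links that are true (single sum $\equiv F_{p+1}\equiv$ the stated case values) and either restrict the double sum to $0\le i,j\le p-1$ --- in which case your Lucas vanishing argument goes through verbatim and the congruence to the single sum is restored --- or correct the statement by the boundary surplus.
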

\begin{proof}
We use the following equality \cite[pg. 1570]{KA} for any positive integer $n$:
\begin{equation}\label{eqn sum cong2}
\begin{split}
F_{n+1}=\sum_{0 \leq i, \, j \leq n} (-1)^{i}\binom{n-i}{j}\binom{i+j}{i}.
\end{split}
\end{equation}
Taking $n=p-1$ for an odd prime $p$ and using $\binom{p-1-i}{j} \equiv (-1)^j\binom{i+j}{j}= (-1)^j\binom{i+j}{i}  \, \, \, \mod \, \, p$ by \thmref{thm main1} in \eqref{eqn sum cong2} gives
\begin{equation}\label{eqn sum cong3a}
\begin{split}
F_{p} \equiv \sum_{0 \leq i, \, j \leq p-1} (-1)^{i+j}\binom{i+j}{i}^2 \, \, \, \mod \, \, p.
\end{split}
\end{equation}
Similarly,
taking $n=p$ for an odd prime $p$ and using $\binom{p-i}{j} \equiv (-1)^j\binom{i+j-1}{j} \, \, \, \mod \, \, p$ by \thmref{thm main2} in \eqref{eqn sum cong2} gives
\begin{equation}\label{eqn sum cong3b}
\begin{split}
F_{p+1} \equiv \sum_{0 \leq i, \, j \leq p} (-1)^{i+j}\binom{i+j-1}{j}\binom{i+j}{i} \, \, \, \mod \, \, p.
\end{split}
\end{equation}
On the other hand, we have 
\begin{equation}\label{eqn sum cong4}
\begin{split}
F_{n}=\sum_{j=0}^{\lfloor \frac{n-1}{2} \rfloor} \binom{n-1-j}{j}.
\end{split}
\end{equation}
In particular, for an odd prime $p$ 
\begin{equation}\label{eqn sum cong4a}
\begin{split}
F_{p}=\sum_{j=0}^{\frac{p-1}{2} } \binom{p-1-j}{j}.
\end{split}
\end{equation}
We have $\binom{p-1-j}{j} \equiv (-1)^j \binom{2j}{j} \, \, \, \mod \, \, p$ by \thmref{thm main1}. Thus, the following congruence is obtained from \eqref{eqn sum cong4a}:
\begin{equation}\label{eqn sum cong5a}
\begin{split}
F_{p} \equiv \sum_{j=0}^{\frac{p-1}{2}} (-1)^j \binom{2j}{j} \, \, \, \mod \, \, p.
\end{split}
\end{equation}
Again, for an odd prime $p$ it follows from \eqref{eqn sum cong4} that
\begin{equation}\label{eqn sum cong4b}
\begin{split}
F_{p+1}=\sum_{j=0}^{\frac{p-1}{2} } \binom{p-j}{j}.
\end{split}
\end{equation}
We have $\binom{p-j}{j} \equiv (-1)^j \binom{2j-1}{j} \, \, \, \mod \, \, p$ by \thmref{thm main2}.
Thus, the following congruence is obtained from \eqref{eqn sum cong4b}:
\begin{equation}\label{eqn sum cong5b}
\begin{split}
F_{p+1} \equiv \sum_{j=0}^{\frac{p-1}{2}} (-1)^j \binom{2j-1}{j} \, \, \, \mod \, \, p.
\end{split}
\end{equation}

Finally, we have $F_{p} \equiv 1 \, \, \, mod \, \, p$ and $F_{p+1} \equiv 1 \, \, \, mod \, \, p$ if $p \equiv \pm 1 \, \, \, mod \, \, 5$. We also have $F_{p} \equiv -1 \, \, \, mod \, \, p$ and $F_{p+1} \equiv 0 \, \, \, mod \, \, p$ if $p \equiv \pm 2 \, \, \, mod \, \, 5$ \cite[pg 78-79]{V}.
This completes the proof.
\end{proof}

\textbf{Declaration of competing interest:} The authors declare that they have no known competing financial interests or personal relationships that could have appeared to influence the work reported in this paper.

\end{document}